\theoremstyle{theorem}
\newtheorem{theorem}{Theorem}[section]
\newtheorem{lemma}[theorem]{Lemma}
\newtheorem{proposition}[theorem]{Proposition}
\theoremstyle{definition}
\newtheorem{definition}[theorem]{Definition}
\theoremstyle{remark}
\newtheorem{remark}{Remark}
\theoremstyle{example}
\title{Cyclic derivations, species realizations and potentials}
\author{Daniel L\'{o}pez-Aguayo}
\address{Tecnologico de Monterrey, Escuela de Ingenier\'{i}a y Ciencias, Hidalgo, M\'{e}xico.}
\email{dlopez.aguayo@itesm.mx}
\begin{document}
\maketitle
\begin{abstract} 
In this survey paper we give an overview of a generalization, introduced by R. Bautista and the author, of the theory of mutation of quivers with potential developed in 2007 by Derksen-Weyman-Zelevinsky. This new construction allows us to consider finite dimensional semisimple $F$-algebras, where $F$ is any field. We give a brief account of the results concerning this generalization and its main consequences.
\end{abstract}
\section{Introduction} 
Since the development of the theory of quivers with potentials created by Derksen-Weyman-Zelevinsky in \cite{5}, the search for a general concept of \emph{mutation of a quiver with potential} has drawn a lot of attention. The theory of quivers with potentials has proven useful in many subjects of mathematics such as cluster algebras, Teichm\"{u}ller theory, KP solitons, mirror symmetry, Poisson geometry, among many others. There have been different generalizations of the notion of a quiver with potential and mutation where the underlying $F$-algebra, $F$ being a field, is replaced by more general algebras, see \cite{4,8,10}. 
This paper is organized as follows. In Section \ref{section2}, we review the preliminaries taken from \cite{1} and \cite{9}. Instead of working with an usual quiver, we consider the completion of the tensor algebra of $M$ over $S$, where $M$ is an $S$-bimodule and $S$ is a finite dimensional semisimple $F$-algebra. We will then see how to construct a cyclic derivation, in the sense of Rota-Sagan-Stein \cite{12}, on the completion of the tensor algebra of $M$. Then we introduce a natural generalization of the concepts of potential, right-equivalence and cyclical equivalence as defined in \cite{5}. 
In Section \ref{section3}, we describe a generalization of the so-called \emph{Splitting theorem} (\cite[Theorem 4.6]{5}) and see how this theorem allows us to lift the notion of mutation of a quiver with potential to this more general setting. 
Finally, in Section \ref{section4}, we recall the notion of species realizations and describe how the generalization given in \cite{1} allows us to give a partial affirmative answer to a question raised by J. Geuenich and D. Labardini-Fragoso in \cite{7}.

\pagebreak
\section{Preliminaries} \label{section2}
The following material is taken from \cite{1} and \cite{9}.

\begin{definition} Let $F$ be a field and let $D_{1},\ldots,D_{n}$ be division rings, each containing $F$ in its center and of finite dimension over $F$. Let $S=\displaystyle \prod_{i=1}^{n} D_{i}$ and let $M$ be an $S$-bimodule of finite dimension over $F$. Define the algebra of formal power series over $M$ as the set:
\begin{center}
$\mathcal{F}_{S}(M)=\left\{\displaystyle \sum_{i=0}^{\infty} a(i): a(i) \in M^{\otimes i}\right\}$
\end{center}
where $M^{0}=S$. Note that $\mathcal{F}_{S}(M)$ is an associative unital $F$-algebra where the product is the one obtained by extending the product of the tensor algebra $T_{S}(M)=\displaystyle \bigoplus_{i=0}^{\infty} M^{\otimes i}$.
\end{definition}

Let $\{e_{1},\ldots,e_{n}\}$ be a complete set of primitive orthogonal idempotents of $S$.

\begin{definition} An element $m \in M$ is legible if $m=e_{i}me_{j}$ for some idempotents $e_{i},e_{j}$ of $S$.
\end{definition}

\begin{definition} Let $Z=\displaystyle \sum_{i=1}^{n} Fe_{i} \subseteq S$. We say that $M$ is $Z$-freely generated by a $Z$-subbimodule $M_{0}$ of $M$ if the map $\mu_{M}: S \otimes_{Z} M_{0} \otimes_{Z} S \rightarrow M$ given by $\mu_{M}(s_{1} \otimes m \otimes s_{2})=s_{1}ms_{2}$ is an isomorphism of $S$-bimodules. In this case we say that $M$ is an $S$-bimodule which is $Z$-freely generated.
\end{definition}

Throughout this paper we will assume that $M$ is $Z$-freely generated by $M_{0}$.

\begin{definition} Let $A$ be an associative unital $F$-algebra. A cyclic derivation, in the sense of Rota-Sagan-Stein \cite{12}, is an $F$-linear function $\mathfrak{h}: A \rightarrow \operatorname{End}_{F}(A)$ such that:
\begin{equation}
\mathfrak{h}(f_{1}f_{2})(f)=\mathfrak{h}(f_{1})(f_{2}f)+\mathfrak{h}(f_{2})(ff_{1})
\end{equation}
for all $f,f_{1},f_{2} \in A$. Given a cyclic derivation $\mathfrak{h}$, we define the associated cyclic derivative $\delta: A \rightarrow A$ as $\delta(f)=\mathfrak{h}(f)(1)$.
\end{definition}

We now construct a cyclic derivative on $\mathcal{F}_{S}(M)$. First, we define a cyclic derivation on the tensor algebra $A=T_{S}(M)$ as follows. Consider the map:
\begin{center}
$\hat{u}: A \times A \rightarrow A$
\end{center}
given by $\hat{u}(f,g)=\displaystyle \sum_{i=1}^{n} e_{i}gfe_{i}$ for every $f,g \in A$. This is an $F$-bilinear map which is $Z$-balanced. By the universal property of the tensor product, there exists a linear map $u: A \otimes_{Z} A \rightarrow A$ such that $u(a \otimes b)=\hat{u}(a,b)$. 
Now we define an $F$-derivation $\Delta: A \rightarrow A \otimes_{Z} A$ as follows. For $s \in S$, we define $\Delta(s)=1 \otimes s - s \otimes 1$; for $m \in M_{0}$, we set $\Delta(m)=1 \otimes m$. Then we define $\Delta: M \rightarrow T_{S}(M)$ by:
\begin{center}
$\Delta(s_{1}ms_{2})=\Delta(s_{1})ms_{2}+s_{1}\Delta(m)s_{2}+s_{1}m\Delta(s_{2})$
\end{center}
for $s_{1},s_{2} \in S$ and $m \in M_{0}$.
Note that the above map is well-defined since $M \cong S \otimes_{Z} M_{0} \otimes_{Z} S$ via the multiplication map $\mu_{M}$. Once we have defined $\Delta$ on $M$, we can extend it to an $F$-derivation on $A$. Now we define $\mathfrak{h}: A \rightarrow \operatorname{End}_{F}(A)$ as follows:
\begin{center}
$\mathfrak{h}(f)(g)=u(\Delta(f)g)$
\end{center}
We have
\begin{align*}
\mathfrak{h}(f_{1}f_{2})(f)&=u(\Delta(f_{1}f_{2})f) \\
&=u(\Delta(f_{1})f_{2}f)+u(f_{1}\Delta(f_{2})f) \\
&=u(\Delta(f_{1})f_{2}f)+u(\Delta(f_{2})ff_{1}) \\
&=\mathfrak{h}(f_{1})(f_{2}f)+\mathfrak{h}(f_{2})(ff_{1}).
\end{align*}
It follows that $\mathfrak{h}$ is a cyclic derivation on $T_{S}(M)$. We now extend $\mathfrak{h}$ to $\mathcal{F}_{S}(M)$. Let $f,g \in \mathcal{F}_{S}(M)$, then $\mathfrak{h}(f(i))(g(j)) \in M^{\otimes(i+j)}$; thus we define $\mathfrak{h}(f)(g)(l)=\displaystyle \sum_{i+j=l} \mathfrak{h}(f(i))(g(j))$ for every $non$-negative integer $l$.

In \cite[Proposition 2.6]{9}, it is shown that the $F$-linear map $\mathfrak{h}: \mathcal{F}_{S}(M) \rightarrow \operatorname{End}_{F}(\mathcal{F}_{S}(M))$ is a cyclic derivation. Using this fact we obtain a cyclic derivative $\delta$ on $\mathcal{F}_{S}(M)$ given by
\begin{center}
$\delta(f)=\mathfrak{h}(f)(1)$.
\end{center}

\begin{definition} Let $\mathcal{C}$ be a subset of $M$. We say that $\mathcal{C}$ is a right $S$-local basis of $M$ if every element of $C$ is legible and if for each pair of idempotents $e_{i},e_{j}$ of $S$, we have that $C \cap e_{i}Me_{j}$ is a $D_{j}$-basis for $e_{i}Me_{j}$.
\end{definition}
We note that a right $S$-local basis $\mathcal{C}$ induces a dual basis $\{u,u^{\ast}\}_{u \in \mathcal{C}}$, where $u^{\ast}: M_{S} \rightarrow S_{S}$ is the morphism of right $S$-modules defined by $u^{\ast}(v)=0$ if $v \in \mathcal{C} \setminus \{u\}$; and $u^{\ast}(u)=e_{j}$ if $u=e_{i}ue_{j}$.

Let $T$ be a $Z$-local basis of $M_{0}$ and let $L$ be a $Z$-local basis of $S$. The former means that for each pair of distinct idempotents $e_{i}$,$e_{j}$ of $S$, $T \cap e_{i}Me_{j}$ is an $F$-basis of $e_{i}M_{0}e_{j}$; the latter means that $L(i)=L \cap e_{i}S$ is an $F$-basis of the division algebra $e_{i}S=D_{i}$. It follows that the non-zero elements of the set $\{sa: s \in L, a \in T\}$ form a right $S$-local basis of $M$. Therefore, for every $s \in L$ and $a \in T$, we have the map $(sa)^{\ast} \in \operatorname{Hom}_{S}(M_{S},S_{S})$ induced by the dual basis.

\begin{definition} Let $\mathcal{D}$ be a subset of $M$. We say that $\mathcal{D}$ is a left $S$-local basis of $M$ if every element of $\mathcal{D}$ is legible and if for each pair of idempotents $e_{i},e_{j}$ of $S$, we have that $\mathcal{D} \cap e_{i}Me_{j}$ is a $D_{i}$-basis for $e_{i}Me_{j}$.
\end{definition}

Let $\psi$ be any element of $\mathrm{Hom}_{S}(M_{S},S_{S})$. We will extend $\psi$ to an $F$-linear endomorphism of $\mathcal{F}_{S}(M)$, which we will denote by $\psi_{\ast}$.

First, we define $\psi_{\ast}(s)=0$ for $s\in S$; and for $M^{\otimes l}$, where $l \geq 1$, we define $\psi_{\ast}(m_{1}\otimes \dotsm \otimes m_{l})=\psi (m_{1})m_{2} \otimes \cdots \otimes m_{l}\in M^{\otimes (l-1)}$ for $m_{1},\dots,m_{l}\in M$. Finally, for $f\in \mathcal{F}_{S}(M)$ we define $\psi_{\ast} (f)\in \mathcal{F}_{S}(M)$ by setting
$\psi_{\ast} (f)(l-1)=\psi_{\ast}(f(l))$ for each integer $l>1$. Then we define
\begin{center}
$\psi_{\ast} (f)=\displaystyle \sum _{l=0}^{\infty }\psi_{\ast} (f(l)).$
\end{center}

\begin{definition} Let $\psi \in M^{*}=\mathrm{Hom}_{S}(M_{S},S_{S})$ and $f\in \mathcal{F}_{S}(M)$. We define $\delta _{\psi }:\mathcal{F}_{S}(M)\rightarrow \mathcal{F}_{S}(M)$ as
\begin{center}
$\delta _{\psi }(f)=\psi_{\ast}(\delta (f))=\displaystyle \sum _{l=0}^{\infty }\psi_{\ast}(\delta (f(l))).$
\end{center}
\end{definition}

\begin{definition} Given an $S$-bimodule $N$ we define the \emph{cyclic part} of $N$ as $N_{cyc}:=\displaystyle \sum_{j=1}^{n} e_{j}Ne_{j}$.
\end{definition}
\begin{definition} A \emph{potential}  $P$ is an element of $\mathcal{F}_{S}(M)_{cyc}$. 
\end{definition}

Motivated by the \emph{Jacobian ideal} introduced in \cite{5}, we define an analogous two-sided ideal of $\mathcal{F}_{S}(M)$. 

For each legible element $a$ of $e_{i}Me_{j}$, we let $\sigma(a)=i$ and $\tau(a)=j$.

\begin{definition} Let $P$ be a potential in $\mathcal{F}_{S}(M)$, we define a two-sided ideal $R(P)$ as the closure of the two-sided ideal of $\mathcal{F}_{S}(M)$ generated by all the elements $X_{a^{\ast}}(P)=\displaystyle \sum_{s \in L(\sigma(a))} \delta_{(sa)^{\ast}}(P)s$, $a \in T$.
\end{definition}

In \cite[Theorem 5.3]{1}, it is shown that $R(P)$ is invariant under algebra isomorphisms that fix pointwise $S$. Furthermore, one can show that $R(P)$ is independent of the choice of the $Z$-subbimodule $M_{0}$ and also independent of the choice of $Z$-local bases for $S$ and $M_{0}$.

\begin{definition} An algebra with potential is a pair $(\mathcal{F}_{S}(M),P)$ where $P$ is a potential in $\mathcal{F}_{S}(M)$ and $M_{cyc}=0$.
\end{definition}

We denote by $[\mathcal{F}_{S}(M),\mathcal{F}_{S}(M)]$ the closure in $\mathcal{F}_{S}(M)$ of the $F$-subspace generated by all the elements of the form $[f,g]=fg-gf$ with $f,g\in \mathcal{F}_{S}(M).$

\begin{definition} Two potentials $P$ and $P'$ are called cyclically equivalent if $P-P' \in  [\mathcal{F}_{S}(M),\mathcal{F}_{S}(M)]$.
\end{definition}

\begin{definition} We say that two algebras with potential $(\mathcal{F}_{S}(M),P)$ and $(\mathcal{F}_{S}(M'),Q)$ are right-equivalent if there exists an algebra isomorphism $\varphi: \mathcal{F}_{S}(M) \rightarrow \mathcal{F}_{S}(M')$, with $\varphi|_{S}=id_{S}$, such that $\varphi(P)$ is cyclically equivalent to $Q$. 
\end{definition}

The following construction follows the one given in \cite[p.20]{5}. Let $k$ be an integer in $[1,n]$ and $\bar{e}_{k}=1-e_{k}$. Using the $S$-bimodule $M$, we define a new $S$-bimodule $\mu_{k}M=\widetilde{M}$ as:
\begin{center}
$\widetilde{M}:=\bar{e}_{k}M\bar{e}_{k} \oplus Me_{k}M \oplus (e_{k}M)^{\ast} \oplus ^{\ast}(Me_{k})$
\end{center}
where $(e_{k}M)^{\ast}=\operatorname{Hom}_{S}((e_{k}M)_{S},S_{S})$, and $^{\ast}(Me_{k})=\operatorname{Hom}_{S}(_{S}(Me_{k}),_{S}S)$. One can show (see \cite[Lemma 8.7]{1}) that $\mu_{k}M$ is $Z$-freely generated. 

\begin{definition} Let $P$ be a potential in $\mathcal{F}_{S}(M)$ such that $e_{k}Pe_{k}=0$. Following \cite{5}, we define
\begin{center}
$\mu_{k}P:=[P]+\displaystyle \sum_{sa \in _{k}\hat{T},bt \in \tilde{T}_{k}}[btsa]((sa)^{\ast})(^{\ast}(bt))$
\end{center}
\end{definition}
where: 
\begin{align*}
_{k}\hat{T}&=\{sa: s \in L(k),a \in T \cap e_{k}M\} \\
\tilde{T}_{k}&=\{bt: b \in T \cap Me_{k}, t \in L(k)\}.  
\end{align*}

\section{Mutations and potentials} \label{section3}
Let $P=\displaystyle \sum_{i=1}^{N} a_{i}b_{i}+P'$ be a potential in $\mathcal{F}_{S}(M)$ where $A=\{a_{1},b_{1},\ldots,a_{N},b_{N}\}$ is contained in a $Z$-local basis $T$ of $M_{0}$ and $P' \in \mathcal{F}_{S}(M)^{\geq 3}$. Let $L_{1}$ denote the complement of $A$ in $T$, $N_{1}$ be the $F$-vector subspace of $M$ generated by $A$ and $N_{2}$ be the $F$-vector subspace of $M$ generated by $L_{1}$; then $M=M_{1} \oplus M_{2}$ as $S$-bimodules where $M_{1}=SN_{1}S$ and $M_{2}=SN_{S}S$. 

One of the main results proved in \cite{5} is the so-called \emph{Splitting theorem} (Theorem 4.6). Inspired by this result, the following theorem is proved in \cite{1}.

\begin{theorem} (\cite[Theorem 7.15]{1}) \label{split} There exists an algebra automorphism $\varphi: \mathcal{F}_{S}(M) \rightarrow \mathcal{F}_{S}(M)$ such that $\varphi(P)$ is cyclically equivalent to a potential of the form $\displaystyle \sum_{i=1}^{N} a_{i}b_{i}+P''$ where $P''$ is a reduced potential contained in the closure of the algebra generated by $M_{2}$ and $\displaystyle \sum_{i=1}^{N} a_{i}b_{i}$ is a trivial potential in $\mathcal{F}_{S}(M_{1})$.
\end{theorem}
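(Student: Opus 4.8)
The plan is to imitate the iterative absorption argument of Derksen--Weyman--Zelevinsky, adapted to the completed tensor algebra $\mathcal{F}_{S}(M)$. Since the quadratic part of $P$ is already presented in the split form $\sum_{i=1}^{N} a_{i}b_{i}$ with $\{a_{1},b_{1},\dots,a_{N},b_{N}\}$ part of the local basis $T$, I do not need to diagonalize it first; the pairing of $a_{i}$ with $b_{i}$ is exactly what makes $\sum_{i=1}^{N}a_{i}b_{i}$ a trivial potential in $\mathcal{F}_{S}(M_{1})$. The automorphism $\varphi$ will be produced as the limit $\varphi=\lim_{d}\psi_{d}\circ\cdots\circ\psi_{3}$ of a sequence of \emph{unitriangular} automorphisms, i.e.\ $F$-algebra automorphisms fixing $S$ pointwise and acting on each generator by the identity plus a correction lying in strictly higher tensor degree. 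Such maps converge in the topology for which $\{\mathcal{F}_{S}(M)^{\geq d}\}_{d\ge 1}$ is a basis of neighborhoods of $0$, and any such limit is again an $S$-fixing automorphism, so the only real work is to arrange the corrections so as to cancel the unwanted terms one degree at a time.

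The inductive step is the heart of the argument. Suppose that after finitely many steps the potential is cyclically equivalent to $\sum_{i=1}^{N}a_{i}b_{i}+Q+R_{d}$, where $Q$ lies in the closure of the subalgebra generated by $M_{2}$ and $R_{d}\in\mathcal{F}_{S}(M)^{\geq d}$ collects every remaining cyclic term of degree $\ge d$ that uses at least one letter from $M_{1}$. I first establish a cyclical-equivalence lemma: working modulo $[\mathcal{F}_{S}(M),\mathcal{F}_{S}(M)]$, every cyclic monomial containing a letter $a_{i}$ may be rewritten with that $a_{i}$ leading, and every one containing a letter $b_{i}$ with that $b_{i}$ trailing. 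Applying this to the homogeneous degree-$d$ part of $R_{d}$, I present it as $\sum_{i=1}^{N}a_{i}g_{i}+\sum_{i=1}^{N}k_{i}b_{i}$ modulo higher degree, where $g_{i}$ is legible with the same source and target as $b_{i}$, $k_{i}$ with the same source and target as $a_{i}$, and $g_{i},k_{i}\in\mathcal{F}_{S}(M)^{\geq d-1}$. I then define $\psi_{d}$ by $\psi_{d}(b_{i})=b_{i}-g_{i}$, $\psi_{d}(a_{i})=a_{i}-k_{i}$, and $\psi_{d}=\mathrm{id}$ on the chosen generators of $M_{2}$. Because $g_{i},k_{i}\in\mathcal{F}_{S}(M)^{\geq 2}$ this $\psi_{d}$ is unitriangular, and the direct computation
\begin{center}
$\psi_{d}\bigl(\sum_{i}a_{i}b_{i}\bigr)=\sum_{i}a_{i}b_{i}-\sum_{i}a_{i}g_{i}-\sum_{i}k_{i}b_{i}+\bigl(\text{terms in }\mathcal{F}_{S}(M)^{\geq d+1}\bigr)$
\end{center}
shows that the degree-$d$, $M_{1}$-involving part of $R_{d}$ is cancelled, while $\psi_{d}$ fixes $Q$ exactly and disturbs the lower-degree terms only in degree $\ge d+1$. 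Hence $\psi_{d}$ carries the potential to one cyclically equivalent to $\sum_{i}a_{i}b_{i}+Q+R_{d+1}$ of the same shape with $R_{d+1}\in\mathcal{F}_{S}(M)^{\geq d+1}$.

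Starting the induction at $d=3$, where $R_{3}$ is the $M_{1}$-involving part of $P'$ and $Q$ is its $M_{2}$-only part, the composite $\varphi=\lim_{d}\psi_{d}\circ\cdots\circ\psi_{3}$ exists by the convergence remarks above, and by construction $\varphi(P)$ is cyclically equivalent to $\sum_{i=1}^{N}a_{i}b_{i}+P''$ with $P''$ the common limit of the unchanged parts $Q$. Since no correction ever introduces a degree-$2$ term or a new letter of $M_{1}$, the limit $P''$ lies in $\mathcal{F}_{S}(M_{2})^{\geq 3}$ and is therefore a reduced potential contained in the closure of the algebra generated by $M_{2}$, while $\sum_{i=1}^{N}a_{i}b_{i}$ is a trivial potential in $\mathcal{F}_{S}(M_{1})$, as required. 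The main obstacle, and the point at which the passage from ordinary quivers to the species setting genuinely demands care, is the cyclical-equivalence rearrangement together with the verification that the tails $g_{i},k_{i}$ are legible of the correct source and target: over the division rings $D_{i}$ one must track the left and right $S$-local bases and the induced duality $\{u,u^{\ast}\}$ to guarantee that each $\psi_{d}$ is a well-defined $S$-bimodule-compatible automorphism, and one must confirm that the infinite composition converges to an automorphism, and not merely an endomorphism, of $\mathcal{F}_{S}(M)$.
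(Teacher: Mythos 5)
Your proposal follows essentially the same route as the proof this survey cites ([1, Theorem 7.15], which is modeled on the Derksen--Weyman--Zelevinsky Splitting Theorem): cyclic rearrangement of the mixed terms into the form $\sum_i a_i g_i + \sum_i k_i b_i$, cancellation via unitriangular substitutions $a_i \mapsto a_i - k_i$, $b_i \mapsto b_i - g_i$ that fix $S$ and the generators of $M_2$, and convergence of the infinite composition in the topology defined by the filtration $\{\mathcal{F}_S(M)^{\geq d}\}$. One bookkeeping repair is needed: since the tails $k_i, g_i$ may themselves contain letters of $M_1$, each substitution can create new $M_2$-only terms of degree $\geq d+1$, so $Q$ is not literally fixed but must be updated at every stage -- your final passage to the limit $P''=\lim Q_d$ already accommodates this, because the increments have strictly increasing degree.
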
 

\begin{definition} Let $P\in \mathcal{F}(M)$ be a potential and $k$ an integer in $\{1,\ldots,n\}$. Suppose that there are no two-cycles passing through $k$. Using Theorem \ref{split}, one can see that $\mu _{k}P$ is right-equivalent to the direct sum of a trivial potential $W$ and a potential $Q$ in $\mathcal{F}_{S}(M)^{\geq 3}$. Following \cite{5}, we define the mutation of $P$ in the direction $k$, as $\overline{\mu }_{k}(P)=Q$.
\end{definition}

One of the main results of \cite{5} is that mutation at an arbitrary vertex is a well-defined involution on the set of right-equivalence classes of reduced quivers with potentials. In \cite{1}, the following analogous result is proved.

\begin{theorem} (\cite[Theorem 8.21]{1}) Let $P$ be a reduced potential such that the mutation $\overline{\mu}_{k}P$ is defined. Then $\overline{\mu}_{k}\overline{\mu}_{k}P$ is defined and it is right-equivalent to $P$.
\end{theorem}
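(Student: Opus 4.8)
The plan is to establish the involutivity of mutation by following the strategy of Derksen--Weyman--Zelevinsky in \cite{5}, adapted to the present setting of $\mathcal{F}_{S}(M)$ via the tools developed in \cite{1}. The key conceptual point is that the premutation operation $\mu_{k}$ is genuinely involutive at the level of the \emph{full} (unreduced) potential, and that the passage to the reduced part $\overline{\mu}_{k}$ is governed by the Splitting theorem (Theorem \ref{split}), which is stable under right-equivalence. Thus the first step is to verify, by a direct computation with the formula defining $\mu_{k}P$ and the $S$-bimodule $\widetilde{M}=\mu_{k}M$, that applying $\mu_{k}$ twice returns the original bimodule $M$ up to a canonical $S$-bimodule isomorphism. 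This amounts to checking that the dualizations $(e_{k}M)^{\ast}$ and ${}^{\ast}(Me_{k})$, together with the composite arrows collected in $Me_{k}M$, recombine correctly: the double-dual identifications $(e_{k}M)^{\ast\ast}\cong e_{k}M$ and ${}^{\ast\ast}(Me_{k})\cong Me_{k}$ hold because each $D_{i}$ is finite-dimensional over $F$ and $M$ is finite-dimensional, and the new ``composite'' summand $\widetilde{M}e_{k}\widetilde{M}$ cancels against the arrows that were introduced in the first step.

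Next I would compare the potentials. After identifying $\mu_{k}\mu_{k}M$ with $M$, one must show that $\mu_{k}\mu_{k}P$ is cyclically equivalent to $P$ plus a distinguished collection of two-cycles between the arrows in $Me_{k}M$ and their formal duals; this is the analogue of the computation on \cite[p.\ 21--22]{5}. Here the cyclic derivative $\delta$ and its components $\delta_{\psi}$, together with the cyclic-equivalence relation modulo $[\mathcal{F}_{S}(M),\mathcal{F}_{S}(M)]$, are the main technical instruments: one writes out $\mu_{k}P=[P]+\sum[btsa]((sa)^{\ast})({}^{\ast}(bt))$, applies $\mu_{k}$ a second time, and tracks how the newly created arrows pair up. The hypothesis that there are no two-cycles through $k$ in $P$ is what guarantees that the trivial quadratic part produced is exactly cancellable. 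I expect this bookkeeping to be the step where the species setting (as opposed to ordinary quivers over a field) demands the most care, since the coefficients now live in the division rings $D_{i}$ and one must keep the left/right module structures and the dual bases $\{u,u^{\ast}\}$ consistent throughout.

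Finally I would invoke the Splitting theorem to pass from the full premutation to the reduced mutation. By Theorem \ref{split}, $\mu_{k}\mu_{k}P$ is right-equivalent to a direct sum of a trivial potential and a reduced potential, and by the previous step the reduced summand is right-equivalent to the reduced part of $P$, which is $P$ itself since $P$ is assumed reduced. The essential fact enabling this is that the reduced part of a potential is well-defined up to right-equivalence, which follows from the uniqueness clause implicit in Theorem \ref{split} combined with the invariance of the Jacobian ideal $R(P)$ under $S$-fixing isomorphisms (\cite[Theorem 5.3]{1}). The main obstacle, in my estimation, is not any single computation but rather the careful verification that all the identifications made in the first two steps are compatible with right-equivalence rather than mere cyclic equivalence, so that the Splitting theorem can be applied cleanly; assembling these compatibilities is where the argument of \cite[Theorem 8.21]{1} must do its real work.
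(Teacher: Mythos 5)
A preliminary remark: this survey never proves the theorem; it only quotes \cite[Theorem 8.21]{1}, whose proof adapts the strategy of \cite[Theorem 5.7]{5}. So your proposal can only be measured against that argument --- which it visibly tries to follow --- and against it two of your steps fail as stated. The first failure is your opening claim that applying $\mu_{k}$ twice ``returns the original bimodule $M$ up to a canonical $S$-bimodule isomorphism'' because the composite summand ``cancels against'' the arrows created in the first step. Direct summands of a bimodule do not cancel. Since the only summands of $\widetilde{M}=\mu_{k}M$ incident to $k$ are the dual bimodules $(e_{k}M)^{\ast}$ and ${}^{\ast}(Me_{k})$, the double premutation is
\[
\mu_{k}\mu_{k}M\;\cong\;\bar{e}_{k}M\bar{e}_{k}\oplus Me_{k}M\oplus\bigl((e_{k}M)^{\ast}\otimes_{D_{k}}{}^{\ast}(Me_{k})\bigr)\oplus Me_{k}\oplus e_{k}M\;\cong\;M\oplus C,
\]
where $C=Me_{k}M\oplus\bigl((e_{k}M)^{\ast}\otimes_{D_{k}}{}^{\ast}(Me_{k})\bigr)$ is nonzero whenever $Me_{k}M\neq 0$; only the double-dual identifications in your step are correct. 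Nothing disappears at the bimodule level: the extra summand $C$ is removed only at the very end, by Theorem \ref{split}, and the statement one must actually prove is that $\mu_{k}\mu_{k}P$ is \emph{right}-equivalent to the direct sum of $P$ with a trivial potential supported on $C$.

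The second failure is your assertion that $\mu_{k}\mu_{k}P$ is \emph{cyclically} equivalent to $P$ plus a collection of two-cycles. It is not, in general. Unwinding the definitions (and using the double-dual identifications) one obtains a potential of the shape $[P]+\sum[ba][a^{\ast}b^{\ast}]+\sum[a^{\ast}b^{\ast}]ba$, in which $[P]$ is written in the composite arrows $[ba]$ rather than in the paths $ba$ through $k$; already for $P$ a single $3$-cycle $cba$ the difference $c[ba]-cba+[a^{\ast}b^{\ast}]ba$ involves three distinct cyclic words and so does not lie in the closure of the span of commutators, hence no cyclic equivalence exists. What is required is an explicit algebra automorphism of $\mathcal{F}_{S}(\mu_{k}\mu_{k}M)$ --- in \cite{5} a unitriangular substitution mixing $[ba]$ with $ba$ --- together with control of its effect on $[P]$. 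Constructing this automorphism in the present setting, compatibly with the $D_{i}$-bimodule structures, the dual bases $\{sa,(sa)^{\ast}\}$ and the $Z$-local bases $L(k)$, is where essentially all of the work in \cite[Theorem 8.21]{1} is concentrated, and your proposal offers no candidate for it beyond calling it bookkeeping. Finally, you never argue that $\overline{\mu}_{k}\overline{\mu}_{k}P$ is \emph{defined}: one must check that the reduced part of $\mu_{k}P$ has no two-cycles through $k$, which follows from the observation that every arrow of $\widetilde{M}$ incident to $k$ is a dual arrow, so a two-cycle through $k$ there would force a two-cycle through $k$ in $M$, contrary to hypothesis. Your third step --- uniqueness in the Splitting theorem plus invariance of $R(P)$ under $S$-fixing isomorphisms --- is the right way to finish, but it becomes available only after the two gaps above are filled.
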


\begin{definition} Let $k_{1}, \hdots, k_{l}$ be a finite sequence of elements of $\{1,\hdots,n\}$ such that $k_{p} \neq k_{p+1}$ for $p=1,\hdots, l-1$. We say that an algebra with potential $(\mathcal{F}_{S}(M),P)$ is $(k_{l},\hdots,k_{1})$-nondegenerate if all the iterated mutations $\bar{\mu}_{k_{1}}P$, $\bar{\mu}_{k_{2}}\bar{\mu}_{k_{1}}P, \hdots, \bar{\mu}_{k_{l}} \cdots \bar{\mu}_{k_{1}}P$ are $2$-acyclic. We say that $(\mathcal{F}_{S}(M),P)$ is nondegenerate if it is $(k_{l},\hdots, k_{1})$-nondegenerate for every sequence of integers as above.
\end{definition}
In \cite{5}, it is shown that if the underlying base field $F$ is uncountable then a nondegenerate quiver with potential exists for every underlying quiver. Motivated by this result, the following theorem is proved in \cite{9}.

\begin{theorem} (\cite[Theorem 3.5]{9}) \label{teouncount} Suppose that the underlying field $F$ is uncountable, then $\mathcal{F}_{S}(M)$ admits a nondegenerate potential.
\end{theorem}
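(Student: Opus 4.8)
The plan is to adapt the countability-based existence argument of Derksen-Weyman-Zelevinsky (\cite[Corollary 7.4]{5}) to the present species-theoretic setting. Fix the $S$-bimodule $M$ with $M_{cyc}=0$, and consider the space $\mathcal{F}_{S}(M)^{\geq 2}_{cyc}$ of potentials, filtered by degree. For a fixed finite sequence $\mathbf{k}=(k_{1},\dots,k_{l})$ of vertices (with consecutive entries distinct), I would argue that the set of potentials $P$ for which the iterated mutation $\bar{\mu}_{k_{l}}\cdots\bar{\mu}_{k_{1}}P$ fails to be $2$-acyclic is ``small'' in a suitable sense, and then take a countable intersection over all such sequences. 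The point is that there are only countably many finite sequences $\mathbf{k}$, so if each individual bad set is a proper, suitably closed subvariety, an uncountable field guarantees the complement of their union is nonempty.

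The key steps are as follows. First I would reduce, as in \cite{5}, to controlling the degree-two component of the mutated potential: whether $\bar{\mu}_{\mathbf{k}}P$ is $2$-acyclic depends only on finitely many Taylor coefficients of $P$ up to some bounded degree $d(\mathbf{k},P)$, because mutation and the reduction procedure of Theorem \ref{split} act on each graded piece through polynomial (indeed algebraic) operations in the structure constants, expressed via the cyclic derivatives $\delta_{(sa)^{\ast}}$ and the explicit formula for $\mu_{k}P$. Second, I would truncate $\mathcal{F}_{S}(M)$ at a large finite degree and regard the relevant coefficient-space as an affine $F$-variety; the locus where a $2$-cycle is \emph{forced} to appear after the prescribed mutation sequence is then a Zariski-closed proper subset, since one can exhibit at least one potential (for instance a generic one built from an $S$-local basis) avoiding it. The nondegeneracy condition for $\mathbf{k}$ thus cuts out the complement of a proper closed set, i.e.\ a nonempty Zariski-open set.

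Finally, I would invoke the hypothesis that $F$ is uncountable. Each sequence $\mathbf{k}$ yields a nonempty Zariski-open subset $U_{\mathbf{k}}$ of the appropriate truncated coefficient-variety; since there are only countably many sequences, and over an uncountable field a countable intersection of nonempty Zariski-open dense subsets of an irreducible variety is nonempty (an affine space over an uncountable field is not a countable union of proper closed subvarieties), the intersection $\bigcap_{\mathbf{k}} U_{\mathbf{k}}$ is nonempty. Any potential lying in this intersection is, by construction, $(k_{l},\dots,k_{1})$-nondegenerate for every admissible sequence, hence nondegenerate. I expect the main obstacle to be the second step: verifying that the operations defining $\bar{\mu}_{\mathbf{k}}$, which in this setting involve division rings $D_{i}$ rather than a single base field and depend on choices of $Z$-local bases $L$ and $T$, genuinely descend to \emph{algebraic} maps on finite-dimensional $F$-coefficient spaces, so that the bad loci are honestly Zariski-closed. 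One must check that the noncommutativity of the $D_{i}$ and the passage through the Splitting theorem do not destroy this polynomiality, and that the invariance results for $R(P)$ guarantee independence from the auxiliary basis choices.
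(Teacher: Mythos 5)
Your overall plan --- encode each condition ``$\bar{\mu}_{k_{l}}\cdots\bar{\mu}_{k_{1}}P$ is $2$-acyclic'' as the nonvanishing of polynomial functions of finitely many coefficients of $P$, note that there are only countably many sequences, and use uncountability of $F$ to find a common good point --- is indeed the strategy of \cite[Theorem 3.5]{9}, which adapts \cite[Corollary 7.4]{5} to this setting. The genuine gap is in your second step, where you assert that each bad locus is a \emph{proper} closed subset ``since one can exhibit at least one potential (for instance a generic one built from an $S$-local basis) avoiding it.'' This is circular: Zariski-openness of the good locus is worthless until you produce a single witness lying in it, and producing, for a fixed finite sequence $\mathbf{k}$, even one potential all of whose iterated mutations along $\mathbf{k}$ are $2$-acyclic is precisely Proposition \ref{infinite} (\cite[Proposition 12.5]{1}) --- a nontrivial induction on the length of $\mathbf{k}$, valid over any infinite field, and the essential input of the cited proof. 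Uncountability of $F$ enters only afterwards, to upgrade per-sequence nondegeneracy to simultaneous nondegeneracy over all (countably many) sequences; it cannot substitute for the per-sequence existence result. You flagged polynomiality over the division rings as the main obstacle, but that part does go through (all the operations involved --- the substitution defining $\mu_{k}P$, the maps $\delta_{(sa)^{\ast}}$, and the reduction of Theorem \ref{split} --- are $F$-multilinear or polynomial in the coefficients); the witness existence is the real crux, and your proposal treats it as a throwaway remark.

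There is also a quantifier problem in your first step: your degree bound $d(\mathbf{k},P)$ is allowed to depend on $P$, which would make the finite-dimensional ``coefficient variety'' ill-defined. What must be (and can be) proved is that each graded component of $\mu_{k}P$, and of the reduced potential produced by Theorem \ref{split}, is a polynomial function of finitely many graded components of $P$, with bounds depending only on the degree and on the sequence $\mathbf{k}$, uniformly in $P$. Moreover, since these bounds grow with $\mathbf{k}$, there is no single truncated affine space containing all the open sets $U_{\mathbf{k}}$, so your closing appeal to ``a countable intersection of dense open subsets of an irreducible variety'' does not literally apply. The intersection has to be taken in the full coefficient space of $\mathcal{F}_{S}(M)_{cyc}$ (countably many $F$-coordinates, one finite block for each $(M^{\otimes d})_{cyc}$, since each graded piece is finite-dimensional over $F$), using the standard lemma that, over an uncountable field, countably many nonzero polynomials, each involving only finitely many of countably many variables, admit a common nonvanishing point. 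Both issues are repairable, but as written the proposal assumes the hard step and misstates the setting in which the genericity argument runs.
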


\section{Species realizations} \label{section4}
We begin this Section by recalling the definition of \emph{species realization} of a skew-symmetrizable integer matrix, in the sense of \cite{7} (Definition 2.22).

\begin{definition} \label{especies} Let $B=(b_{ij}) \in \mathbb{Z}^{n \times n}$ be a skew-symmetrizable matrix, and let $I=\{1,\hdots, n\}$. A species realization of $B$ is a pair $(\mathbf{S},\mathbf{M})$ such that:
\begin{enumerate}
\item $\mathbf{S}=(F_{i})_{i \in I}$ is a tuple of division rings;
\item $\mathbf{M}$ is a tuple consisting of an $F_{i}-F_{j}$ bimodule $M_{ij}$ for each pair $(i,j) \in I^{2}$ such that $b_{ij}>0$;
\item for every pair $(i,j) \in I^{2}$ such that $b_{ij}>0$, there are $F_{j}-F_{i}$-bimodule isomorphisms
\begin{center}
$\operatorname{Hom}_{F_{i}}(M_{ij},F_{i}) \cong \operatorname{Hom}_{F_{j}}(M_{ij},F_{j})$;
\end{center}
\item for every pair $(i,j) \in I^{2}$ such that $b_{ij}>0$ we have $\operatorname{dim}_{F_{i}}(M_{ij})=b_{ij}$ and $\operatorname{dim}_{F_{j}}(M_{ij})=-b_{ji}$.
\end{enumerate}
\end{definition}

In \cite[p.29]{1}, we impose the following condition on each of the bases $L(i)$. For each $s,t \in L(i)$:
\begin{equation} \label{eq2}
e_{i}^{\ast}(st^{-1}) \neq 0 \ \text{implies} \ s=t \ \text{and} \ e_{i}^{\ast}(s^{-1}t) \neq 0 \ \text{implies} \ s=t
\end{equation}
where $e_{i}^{\ast}: D_{i} \rightarrow F$ denotes the standard dual map corresponding to the basis element $e_{i} \in L(i)$.

In \cite[p.14]{7}, motivated by the seminal paper \cite{5}, J. Geuenich and D. Labardini-Fragoso raise the following question: \\

\textbf{Question} \cite[Question 2.23]{7} Can a mutation theory of species with potential be defined so that every skew-symmetrizable matrix $B$ have a species realization which admit a nondegenerate potential? \\

In \cite[Corollary 3.6]{9} a partially affirmative answer to Question $2.23$ is given by proving the following: let $B=(b_{ij}) \in \mathbb{Z}^{n \times n}$ be a skew-symmetrizable matrix with skew-symmetrizer $D=\operatorname{diag}(d_{1},\ldots,d_{n})$. If $d_{j}$ divides $b_{ij}$ for every $j$ and every $i$, then the matrix $B$ can be realized by a species that admits a nondegenerate potential.

We now give an example (\cite[p.8]{9}) of a class of skew-symmetrizable $4 \times 4$ integer matrices, which are not globally unfoldable nor strongly primitive, and that have a species realization admitting a nondegenerate potential. This gives an example of a class of skew-symmetrizable $4 \times 4$ integer matrices which are not covered by \cite{8}. 

Let 
\begin{equation} \label{eq3}
B
=\begin{bmatrix}
0 & -a & 0 & b \\
1 & 0 & -1 & 0 \\
0 & a & 0 & -b \\
-1 & 0 & 1 & 0
\end{bmatrix}
\end{equation}
where $a,b$ are positive integers such that $a<b$, $a$ does not divide $b$ and $\gcd(a,b) \neq 1$. \\

Note that there are infinitely many such pairs $(a,b)$. For example, let $p$ and $q$ be primes such that $p<q$. For any $n \geq 2$, define $a=p^{n}$ and $b=p^{n-1}q$. Then $a<b$, $a$ does not divide $b$ and $\gcd(a,b)=p^{n-1} \neq 1$. Note that $B$ is skew-symmetrizable since it admits $D=\operatorname{diag}(1,a,1,b)$ as a skew-symmetrizer. \\

\begin{remark} By \cite[Example 14.4]{8} we know that the class of all matrices given by \eqref{eq3} does \emph{not} admit a global unfolding. Moreover, since we are not assuming that $a$ and $b$ are coprime, then such matrices are not strongly primitive; hence they are not covered by \cite{8}. 
\end{remark}

We have the following
\begin{proposition} (\cite[Proposition 5.2]{9})
The class of all matrices given by \eqref{eq3} are not globally unfoldable nor strongly primitive, yet they can be realized by a species admitting a nondegenerate potential.  
\end{proposition}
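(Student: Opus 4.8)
The plan is to split the proposition into its two independent assertions and dispatch each by reduction to a result already available. The negative structural statements --- that the family \eqref{eq3} admits no global unfolding and fails to be strongly primitive --- are exactly the content of the preceding Remark, so for these I would simply reproduce that reasoning: \cite[Example 14.4]{8} treats precisely this family and rules out a global unfolding, while the standing hypothesis $\gcd(a,b) \neq 1$ forbids strong primitivity. No additional argument is required for this half.

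The real content is the positive assertion, and here the entire strategy is to verify the hypothesis of \cite[Corollary 3.6]{9} and then quote it. That corollary produces a species realization admitting a nondegenerate potential whenever the skew-symmetrizer $D=\operatorname{diag}(d_{1},\ldots,d_{n})$ satisfies the divisibility condition $d_{j} \mid b_{ij}$ for all $i$ and $j$. So I would take the skew-symmetrizer $D=\operatorname{diag}(1,a,1,b)$ already recorded in the text and check this condition column by column.

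The verification is routine. Columns $1$ and $3$ carry $d_{1}=d_{3}=1$, so divisibility is automatic there. In column $2$ the only nonzero entries are $b_{12}=-a$ and $b_{32}=a$, each divisible by $d_{2}=a$; in column $4$ the only nonzero entries are $b_{14}=b$ and $b_{34}=-b$, each divisible by $d_{4}=b$. Hence $d_{j} \mid b_{ij}$ holds for every $i$ and $j$, and Corollary 3.6 applies directly. Since the computation is uniform in the pair $(a,b)$, this settles the whole class \eqref{eq3} at once.

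I do not expect a genuine obstacle: the point of the proposition is precisely that this explicit family slips past the coprimality and unfolding hypotheses of \cite{8} while still satisfying the weaker divisibility hypothesis of Corollary 3.6. The only step demanding any care is to confirm that $D=\operatorname{diag}(1,a,1,b)$ really is a skew-symmetrizer, i.e.\ that $DB$ is skew-symmetric; this reduces to checking the relation $d_{i}b_{ij}=-d_{j}b_{ji}$ on the four nonzero off-diagonal pairs $(b_{12},b_{21})=(-a,1)$, $(b_{23},b_{32})=(-1,a)$, $(b_{14},b_{41})=(b,-1)$, and $(b_{34},b_{43})=(-b,1)$, each of which satisfies it immediately.
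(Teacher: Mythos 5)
Your proposal is correct and follows essentially the same route as the paper: the negative assertions are exactly the content of the preceding Remark (via \cite[Example 14.4]{8} and the hypothesis $\gcd(a,b)\neq 1$), and the positive assertion follows by checking the divisibility hypothesis $d_{j}\mid b_{ij}$ for $D=\operatorname{diag}(1,a,1,b)$ and invoking \cite[Corollary 3.6]{9}, which is precisely how the paper argues. Your column-by-column verification and the check that $DB$ is skew-symmetric are both accurate, so nothing is missing.
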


By Theorem \ref{teouncount}, we know that a nondegenerate potential exists provided the underlying field $F$ is uncountable. If $F$ is infinite (but not necessarily uncountable) one can show that $\mathcal{F}_{S}(M)$ admits ``locally'' nondegenerate potentials. More precisely, we have

\begin{proposition} (\cite[Proposition 12.5]{1}) \label{infinite} Let $F$ be an infinite field and let $k_{1},\ldots,k_{l}$ be an arbitrary sequence of elements of $\{1,\ldots,n\}$. Then there exists a potential $P \in \mathcal{F}_{S}(M)$ such that the mutation $\overline{\mu}_{k_{l}} \cdots \overline{\mu}_{k_{1}}P$ exists.
\end{proposition}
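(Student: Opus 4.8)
The plan is to realize the set of potentials for which the prescribed sequence of mutations is defined as a nonempty Zariski-open subset of an affine space over $F$, and then to exploit the fact that over an infinite field such a set always has an $F$-rational point. Fix bases $T$ and $L$ as in Section \ref{section2}, so that a potential is encoded by its (a priori infinite) tuple of coefficients relative to the induced basis of the cyclic part. The first observation is that whether $\overline{\mu}_{k_l}\cdots\overline{\mu}_{k_1}P$ exists depends only on finitely many of these coefficients: each $\overline{\mu}_{k_j}$ raises degrees in a controlled way, and the existence of the next mutation is a condition on the degree-two part (the $2$-cycles) of the intermediate potential, which in turn is determined by the truncation of $P$ up to some degree $d=d(l)$. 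I would therefore replace $\mathcal{F}_{S}(M)$ by the finite-dimensional $F$-space $V$ of truncated potentials and prove the statement there.

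Next I would show that the good locus $U\subseteq V$ of potentials admitting the full sequence is Zariski-open and nonempty. For openness, note that by Theorem \ref{split} the passage from a potential to its reduced mutation is given by polynomial formulas in the coefficients (the automorphism $\varphi$ and the splitting are built degree by degree out of the terms $[btsa]((sa)^{\ast})({}^{\ast}(bt))$), so each intermediate potential $\overline{\mu}_{k_j}\cdots\overline{\mu}_{k_1}P$ has coefficients that are polynomials in those of $P$. The requirement that the next mutation be defined is that the reduced quiver carry no $2$-cycle through $k_{j+1}$; since the matrix mutation of a skew-symmetrizable matrix never produces a $2$-cycle, the appearance of an uncancelled $2$-cycle is precisely the degeneracy of the bilinear pairing that the reduction uses to split off the arrows coming from $Me_{k_j}M$, a proper closed condition cut out by the vanishing of a single polynomial. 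Hence $U=U_{1}\cap\cdots\cap U_{l}$, with each $U_{j}$ the non-vanishing locus of a polynomial $p_{j}$ in the coefficients of $P$.

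For nonemptiness I would appeal to the genericity statement underlying Theorem \ref{teouncount}: for any algebra with potential and any vertex with no $2$-cycle through it, a generic potential can be mutated there without creating new $2$-cycles, i.e. the relevant $p_{j}$ is not the zero polynomial. The essential point is that this genericity is field-independent — the uncountability hypothesis in Theorem \ref{teouncount} is used only to intersect the countably many open conditions demanded by full nondegeneracy, whereas here the sequence is finite and we intersect only $l$ of them. Thus $p:=p_{1}\cdots p_{l}$ is a nonzero polynomial in finitely many variables and $U$ is exactly the locus $p\neq 0$. Since $F$ is infinite, a nonzero polynomial in finitely many variables cannot vanish at every point of $F^{\dim V}$; any point where $p\neq 0$ yields the required potential $P$.

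The main obstacle is the nonemptiness input, i.e. the fact that mutation does not generically manufacture $2$-cycles, together with the care needed in importing it to the merely infinite setting. One cannot simply base-change to an uncountable extension $K\supseteq F$ and invoke Theorem \ref{teouncount}, because $D_{i}\otimes_{F}K$ need not remain a division ring, so $(S\otimes_{F}K, M\otimes_{F}K)$ falls outside the class of algebras to which the theory applies; the genericity of each single mutation step must therefore be extracted intrinsically from the reduction furnished by Theorem \ref{split}. A secondary technical point is to pin down the degree bound $d(l)$ ensuring that only finitely many coefficients intervene, which reduces to verifying that the low-degree part of a reduced mutation is a function of the low-degree part of its input.
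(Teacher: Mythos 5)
Your skeleton --- reduce to finitely many coefficients, show that the locus of potentials admitting the prescribed mutation sequence is Zariski open in a finite-dimensional affine $F$-space, and then use that a nonempty open set over an infinite field has a rational point --- is indeed the skeleton of the proof of \cite[Proposition 12.5]{1}, which follows \cite[Proposition 7.3]{5}. The genuine gap sits exactly where you yourself locate ``the main obstacle'': the non-vanishing of the polynomials $p_{j}$, i.e.\ the nonemptiness of each $U_{j}$. Your proposed source for this, ``the genericity statement underlying Theorem \ref{teouncount}'', is circular: in \cite{5}, and likewise in \cite{1} and \cite{9}, the uncountable-field theorem is \emph{deduced from} the present finite-sequence proposition (nondegeneracy is the intersection of the countably many open conditions that Proposition \ref{infinite} provides), not the other way around. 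There is no independent genericity lemma behind Theorem \ref{teouncount} for you to invoke; that lemma \emph{is} the statement being proved. Your remark that genericity is ``field-independent'' also cannot be taken for granted: the relevant polynomials have coefficients built from the structure constants of the division rings $D_{i}$ and of $M$, and whether maximal cancellation of $2$-cycles is achievable at all is sensitive to this data --- this is precisely why conditions such as (\ref{eq2}) on the bases $L(i)$ are imposed in \cite{1} and \cite{9}.

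What actually fills the gap is an induction on $l$ whose nonemptiness step uses the involution theorem (\cite[Theorem 8.21]{1}, quoted in Section \ref{section3}) together with an explicit one-step construction. For a single mutation at $k$, the degree-$2$ part of $\mu_{k}P$ is a pairing whose entries involve the low-degree coefficients of $P$ through $k$; since $P$ is arbitrary, these can be chosen freely so as to achieve maximal rank, so the one-step locus is nonempty (and open). For longer sequences one cannot choose the relevant coefficients of the intermediate potentials freely --- they are constrained to lie in the image of the previous mutations --- and this is where involutivity enters: by induction there is a nonempty open locus of potentials $T$ on the mutated bimodule admitting the tail of the sequence, it meets the nonempty open one-step locus because the ambient truncated space is an irreducible affine space over an infinite field, and for $T$ in the intersection the potential $P:=\overline{\mu}_{k_{1}}(T)$ lives on $M$ and satisfies $\overline{\mu}_{k_{1}}P$ right-equivalent to $T$, hence admits the whole sequence. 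Without this mechanism (or a substitute), your claim $p_{j}\not\equiv 0$ is an assertion, not a proof. Two secondary inaccuracies: the passage to the reduced part is rational, not polynomial, in the coefficients (splitting off the trivial part requires inverting the degree-$2$ pairing), which still yields openness on the locus where the earlier steps are defined but needs to be said; and failure of $2$-acyclicity is the common vanishing of \emph{all} maximal minors of the relevant pairings, not of a single polynomial.
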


We conclude the paper by giving an example of a class of skew-symmetrizable $4 \times 4$ integer matrices that have a species realization via field extensions of the rational numbers. Although in this case we cannot guarantee the existence of a nondegenerate potential, we can guarantee (by Proposition \ref{infinite}) the existence of ``locally'' nondegenerate potentials.

First we require some definitions.

\begin{definition} Let $E/F$ be a finite field extension. An $F$-basis of $E$, as a vector space, is said to be semi-multiplicative if the product of any two elements of the basis is an $F$-multiple of another basis element.
\end{definition}

It can be shown that every extension $E/F$ which has a semi-multiplicative basis satisfies (\ref{eq2}).

\begin{definition} A field extension $E/F$ is called a simple radical extension if $E=F(a)$ for some $a \in E$, with $a^n \in F$ and $n \geq 2$.
\end{definition}

Note that if $E/F$ is a simple radical extension then $E$ has a semi-multiplicative $F$-basis.

\begin{definition} A field extension $E/F$ is a radical extension if there exists a tower of fields $F=F_{0} \subseteq F_{1} \ldots \subseteq F_{l}=E$ such that $F_{i}/F_{i-1}$ is a simple radical extension for $i=1,\ldots, l$. 
\end{definition}

As before, let
\begin{equation} 
B
=\begin{bmatrix}
0 & -a & 0 & b \\
1 & 0 & -1 & 0 \\
0 & a & 0 & -b \\
-1 & 0 & 1 & 0
\end{bmatrix}
\end{equation}
but without imposing additional conditions on $a$ or $b$.

\begin{proposition} \label{rationals} Let $n,m \geq 2$. The matrix $B$ admits a species realization $(\mathbf{S},\mathbf{M})$ where $\mathbf{M}$ is a $Z$-freely generated $S$-bimodule and $S$ satisfies (\ref{eq2}).
\end{proposition}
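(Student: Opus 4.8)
The plan is to realize $B$ concretely over the ground field $F=\mathbb{Q}$, placing a radical extension at each of the two ``heavy'' vertices. Since the skew-symmetrizer is $D=\operatorname{diag}(1,a,1,b)$, I would set $F_{1}=F_{3}=\mathbb{Q}$ and choose $F_{2},F_{4}$ to be simple radical extensions of $\mathbb{Q}$ of degrees $a$ and $b$; concretely, fixing distinct primes $p,q$, one may take $F_{2}=\mathbb{Q}(\sqrt[a]{p})$ and $F_{4}=\mathbb{Q}(\sqrt[b]{q})$, which are irreducible of the prescribed degrees by Eisenstein's criterion (the standing hypotheses $n,m\geq 2$, read as $a,b\geq 2$, are exactly what makes these extensions proper). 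Put $S=F_{1}\times F_{2}\times F_{3}\times F_{4}$ with primitive idempotents $e_{1},\dots,e_{4}$ and $Z=\bigoplus_{i}\mathbb{Q}e_{i}$. Because each $F_{i}$ is either $\mathbb{Q}$ or a simple radical extension, the preceding remarks apply directly: a simple radical extension carries a semi-multiplicative $\mathbb{Q}$-basis, and such a basis forces condition \eqref{eq2}, while the factors $F_{1},F_{3}=\mathbb{Q}$ satisfy it trivially. Hence $S$ satisfies \eqref{eq2}.

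Next I would define the four bimodules demanded by the positive entries of $B$, letting each be a copy of the relevant extension field with $\mathbb{Q}$ acting on the opposite side: $M_{21}={}_{F_{2}}(F_{2})_{\mathbb{Q}}$, $M_{14}={}_{\mathbb{Q}}(F_{4})_{F_{4}}$, $M_{32}={}_{\mathbb{Q}}(F_{2})_{F_{2}}$ and $M_{43}={}_{F_{4}}(F_{4})_{\mathbb{Q}}$. A direct count gives $\dim_{F_{2}}M_{21}=1=b_{21}$ and $\dim_{F_{1}}M_{21}=[\,F_{2}:\mathbb{Q}\,]=a=-b_{12}$, and likewise for the remaining three, so condition (4) of Definition \ref{especies} holds; thus $\mathbf{M}=(M_{21},M_{14},M_{32},M_{43})$ has the correct numerical shape, and $M:=\bigoplus M_{ij}$ is an $S$-bimodule whose block $e_{i}Me_{j}$ is precisely $M_{ij}$.

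The heart of the argument is condition (3), the self-duality $\operatorname{Hom}_{F_{i}}(M_{ij},F_{i})\cong\operatorname{Hom}_{F_{j}}(M_{ij},F_{j})$ as $F_{j}$-$F_{i}$ bimodules. For each of our bimodules one side is free of rank one over the acting field and hence yields the regular bimodule; for $M_{21}$, say, one has $\operatorname{Hom}_{F_{2}}(F_{2},F_{2})\cong F_{2}$. The other side, $\operatorname{Hom}_{\mathbb{Q}}(F_{2},\mathbb{Q})$, is the $\mathbb{Q}$-linear dual, and I would identify it with $F_{2}$ through the trace pairing $x\mapsto\operatorname{Tr}_{F_{2}/\mathbb{Q}}(x\cdot-)$. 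Since $\operatorname{char}\mathbb{Q}=0$, every $F_{i}/\mathbb{Q}$ is separable, so the trace form is nondegenerate and this pairing is an isomorphism; a short check shows it intertwines the right $F_{2}$-action and the central left $\mathbb{Q}$-action, hence is an isomorphism of $\mathbb{Q}$-$F_{2}$ bimodules. The same reasoning applies verbatim to $M_{14},M_{32},M_{43}$, yielding condition (3). I expect this matching of the two one-sided bimodule structures — rather than the mere existence of the isomorphism, which is automatic in characteristic zero — to be the one step requiring genuine care.

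Finally I would verify that $M$ is $Z$-freely generated. Taking $M_{0}$ to be the block-diagonal $Z$-subbimodule spanned by the element $1$ in each of the four field blocks, the multiplication map $\mu_{M}\colon S\otimes_{Z}M_{0}\otimes_{Z}S\to M$ splits as a direct sum over blocks, and on the block $M_{21}$ it is the map $F_{2}\otimes_{\mathbb{Q}}(\mathbb{Q}\cdot 1)\otimes_{\mathbb{Q}}\mathbb{Q}\to F_{2}$, an isomorphism by a dimension count; the remaining three blocks are identical. Therefore $\mu_{M}$ is an isomorphism, $M$ is $Z$-freely generated by $M_{0}$, and $(\mathbf{S},\mathbf{M})$ is the desired species realization.
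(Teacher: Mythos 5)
Your proof is correct, but it departs from the paper's argument at the key technical step, and in fact it establishes a stronger statement. The paper fixes the ground field $F=\mathbf{Q}(\zeta_{n})$ and takes $F_{2}=F(\sqrt[n]{p_{1}})$ and $F_{4}=F(\sqrt[n]{p_{1}},\ldots,\sqrt[n]{p_{m}})$, invoking Lemma \ref{lemroots} (Roman's Theorem 14.3.2) to obtain degrees $n$ and $n^{m}$; consequently its construction realizes only the matrices with $(a,b)=(n,n^{m})$, i.e.\ $b$ a perfect power of $a$, and since $F_{4}/F$ is a tower of simple radical extensions (not a simple one), it must appeal to the cited remark of \cite{1} that radical extensions satisfy (\ref{eq2}). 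You instead work over $F=\mathbb{Q}$ with two \emph{independent} simple radical extensions $\mathbb{Q}(\sqrt[a]{p})$ and $\mathbb{Q}(\sqrt[b]{q})$, so the degree computation reduces to Eisenstein's criterion, the check of (\ref{eq2}) needs only the semi-multiplicative-basis fact for simple radical extensions, and arbitrary $a,b\geq 2$ are realized with no relation between them --- which subsumes the paper's case. Your bimodules agree with the paper's (the blocks $F_{i}\otimes_{F}F_{j}$ collapse to your one-sided regular bimodules because one tensor factor is always $\mathbb{Q}$), but where the paper simply asserts ``It follows that $(\mathbf{S},\mathbf{M})$ is a species realization,'' you verify conditions (3) and (4) of Definition \ref{especies} and the $Z$-free generation explicitly; the trace-pairing argument for the self-duality condition (3) is valid, since in characteristic zero the trace form is nondegenerate and the intertwining check goes through because $\mathbb{Q}$ is central and the fields are commutative. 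One caveat on interpretation: the paper's hypothesis ``Let $n,m\geq 2$'' is implicitly tied to $(a,b)=(n,n^{m})$ by its proof, whereas you read it as $a,b\geq 2$; under either reading your argument proves the proposition, and under yours it strengthens it.
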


\begin{proof} To prove this we will require the following result (cf. \cite[Theorem 14.3.2]{11}).

\begin{lemma} \label{lemroots} Let $n \geq 2$, $p_{1},\ldots,p_{m}$ be distinct primes and let $\mathbf{Q}$ denote the set of all rational numbers. Let $\zeta_{n}$ be a primitive $nth$-root of unity. Then
\begin{center}
$[\mathbf{Q}(\zeta_{n})(\sqrt[n]{p_{1}},\ldots,\sqrt[n]{p_{m}}): \mathbf{Q}(\zeta_{n})]=n^{m}$
\end{center}
\end{lemma}
Now we continue with the proof of Proposition \ref{rationals}. Let $F=\mathbf{Q}(\zeta_{n})$ be the base field and let $p_{1}$ be an arbitrary prime. By Lemma \ref{lemroots}, $F_{2}=F(\sqrt[n]{p_{1}})/F$ has degree $n$. Now choose $m-1$ distinct primes $p_{2},p_{3},\ldots,p_{m}$ and also distinct from $p_{1}$. Define $F_{4}=F(\sqrt[n]{p_{1}},\sqrt[n]{p_{2}},\ldots,\sqrt[n]{p_{m}})/F$, then by Lemma \ref{lemroots}, $F_{4}$ has degree $n^{m}$.
Let $S=F \oplus F_{2} \oplus F \oplus F_{4}$ and $Z=F \oplus F \oplus F \oplus F$. Since $F/\mathbf{Q}$ is a simple radical extension then it has a semi-multiplicative basis; thus it satisfies (\ref{eq2}). On the other hand, note that $F_{2}/\mathbf{Q}(\zeta_{n})$ and $F_{4}/\mathbf{Q}(\zeta_{n})$ are radical extensions. Using \cite[Remark 6, p.29]{1} we get that both $F_{2}$ and $F_{4}$ satisfy (\ref{eq2}); hence, it is always possible to choose a $Z$-local basis of $S$ satisfying (\ref{eq2}). Finally, for each $b_{ij}>0$, define $e_{i}Me_{j}=(F_{i} \otimes_{F} F_{j})^{\frac{b_{ij}}{d_{j}}}=F_{i} \otimes_{F} F_{j}$. It follows that $(\mathbf{S},\mathbf{M})$ is a species realization of $B$. 
\end{proof}


\end{document}